\newtheorem{lem}{Lemma}
\newtheorem{thm}{Theorem}
\title{Periodic solutions of Abreu's equation}
\author{Renjie Feng and G\'abor Sz\'ekelyhidi\textsuperscript{\dag}}
\thanks{\dag\,Partially supported by NSF grant DMS-0904223}
\address{Department of Mathematics, Northwestern University, Evanston,
IL}
\email{renjie@math.northwestern.edu}
\address{Department of Mathematics, Columbia University, New York, NY}
\email{gabor@math.columbia.edu}
\begin{document}
\begin{abstract}
	We solve Abreu's equation with periodic right hand side, in any
	dimension. This can be interpreted as prescribing the scalar
	curvature of a torus invariant metric on an Abelian variety.
\end{abstract}
\maketitle

\section{Introduction}

Given a smooth periodic function $A:\mathbb{R}^n\to\mathbb{R}$, we would
like to find a convex function $u:\mathbb{R}^n\to\mathbb{R}$ such that
\begin{equation}\label{eq:abreu}
	\sum_{i,j} \frac{\partial^2 u^{ij}}{\partial x_i\partial x_j} = A,
\end{equation}
where $u^{ij}$ is the inverse of the Hessian of $u$. By an affine
transformation we can assume that the fundamental domain for the
periodicity is $\Omega=[0,1]^n$. Our main result is the following.

\begin{thm}\label{thm:main}
	For any smooth periodic $A$ with mean $0$,
	we can find a smooth periodic
	function $\phi:\mathbb{R}^n\to\mathbb{R}$ such that
	\begin{equation}\label{eq:phi}
		u(x) = \frac{1}{2}|x|^2 + \phi(x)
	\end{equation}
	is a convex
	solution of Equation (\ref{eq:abreu}). Moreover $\phi$ is
	unique up to adding a constant.
\end{thm}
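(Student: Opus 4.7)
The plan is a continuity method. Writing $S(u) = \sum_{i,j} \partial_i\partial_j u^{ij}$, consider the family
\[
S(u_t) \;=\; tA, \qquad t\in[0,1],
\]
with $u_t = \tfrac12|x|^2 + \phi_t$ and $\phi_t$ smooth and periodic; at $t=0$ the trivial choice $\phi_0\equiv 0$ works. Let $T\subset[0,1]$ be the set of $t$ for which a smooth convex solution exists. I would show that $T$ is open and closed, so $T=[0,1]$.

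\emph{Openness.} Linearizing $S$ at a solution $u_t$ yields the fourth-order self-adjoint operator
\[
\mathcal{L}\,v \;=\; -\sum_{i,j,k,l}\partial_i\partial_j\bigl(u_t^{ik}u_t^{jl}\,\partial_k\partial_l v\bigr),
\]
which is elliptic since $u_t$ is smooth and strictly convex, hence Fredholm on the torus. Integration by parts shows that $\mathcal{L}v=0$ forces $D^2 v = 0$, and periodicity then forces $v$ to be constant; self-adjointness gives the same cokernel. Thus on the slice of mean-zero periodic functions in $C^{4,\alpha}$, $\mathcal{L}$ is an isomorphism onto mean-zero $C^{0,\alpha}$, and since $A$ has mean zero by hypothesis, the implicit function theorem produces nearby solutions.

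\emph{Closedness.} One needs uniform a priori $C^\infty$ bounds on $\phi_t$, normalized by $\int_\Omega \phi_t = 0$. By a standard bootstrap this reduces to (i) a $C^0$ bound on $\phi_t$ and (ii) two-sided uniform bounds on the eigenvalues of $I+D^2\phi_t$: once the Hessian is controlled the equation becomes uniformly fourth-order elliptic in $\phi_t$, and Schauder estimates close the bootstrap. Step (i) should follow from the mean-zero normalization combined with an energy-type argument adapted to the fourth-order structure. Step (ii) — preventing both degeneration (loss of strict convexity) and blow-up of the Hessian — is the principal technical obstacle, and I expect it to require estimates specific to Abreu's equation, in the spirit of the Hessian bounds of Donaldson, Chen--Li--Sheng, and Trudinger--Wang from the toric K\"ahler setting, simplified here by the absence of boundary.

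\emph{Uniqueness.} The Abreu equation is the Euler--Lagrange equation of a Mabuchi-type functional $\mathcal{F}$ which is convex along the linear path $u_s = (1-s)u_0+s u_1$ between two solutions, strictly so unless $u_1-u_0$ is affine; periodicity then forces any such affine difference to be constant. Equivalently, subtracting the two equations, pairing with $\phi_1-\phi_0$, and integrating by parts produces a nonnegative quadratic form in $D^2(\phi_1-\phi_0)$ whose vanishing forces $\phi_1-\phi_0$ to be constant.
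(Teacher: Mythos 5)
Your overall architecture (continuity method, openness via the implicit function theorem on mean-zero H\"older spaces, uniqueness via convexity of a Mabuchi-type functional along linear paths) matches the paper's, and those two parts of your proposal are essentially complete. The gap is in closedness: you correctly identify the two-sided Hessian bound as ``the principal technical obstacle'' and then defer it to estimates ``in the spirit of'' Donaldson, Chen--Li--Sheng and Trudinger--Wang without supplying them. That deferred step is the actual mathematical content of the result; a proof that leaves it as an expectation is not a proof. Concretely, what is missing is: (a) two-sided bounds on $\det(u_{ij})$, which the paper obtains by a maximum-principle argument applied not to $u$ but to its Legendre transform $v=\tfrac12|y|^2+\psi$, using the auxiliary functions $L+\tfrac12|y|^2+2\psi$ and $-L-\beta|\nabla v|^2+v$ with $L=\log\det(v_{ab})$ (the periodicity is used to force the extrema of these functions into a fixed ball, replacing the boundary analysis of the toric case); and (b) the passage from determinant bounds to eigenvalue bounds, which is \emph{not} a further maximum-principle estimate but goes through the Caffarelli--Guti\'errez H\"older estimate for the linearized Monge--Amp\`ere equation applied to $w=(\det u_{ab})^{-1}$ on a normalized section, followed by Caffarelli's $C^{2,\alpha}$ interior estimates for the Monge--Amp\`ere equation. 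The normalization of sections is where strict convexity enters, and here it is essentially free because $u-\tfrac12|x|^2$ is bounded, so $B(\sqrt h)\subset\{u<h\}\subset B(\sqrt{3h})$ for large $h$.

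Two smaller points. Your step (i) does not need an energy argument: convexity of $u=\tfrac12|x|^2+\phi$ gives $D^2\phi>-\mathrm{Id}$, and combined with periodicity this already yields uniform $C^0$ and $C^1$ bounds on $\phi$ by an elementary pointwise argument. And in the openness step, $\mathcal{L}v=0$ integrated against $v$ gives $u_t^{ik}u_t^{jl}v_{kl}v_{ij}=0$ pointwise, hence $D^2v=0$; you state this correctly, but note it is the same integration by parts that drives the uniqueness argument, so the two parts of your write-up could be unified.
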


\noindent The proof shows that the result is true with $\frac{1}{2}|x|^2$
replaced with any strictly convex smooth function $f$ for which $f_{ij}$
is periodic. 
That $A$ must have mean $0$ can be seen by a simple integration by
parts.

The Equation (\ref{eq:abreu}) was introduced by Abreu~\cite{A}
in the study of the scalar curvature of toric varieties. In that case
the domain of $u$ is a convex polytope in $\mathbb{R}^n$, and $u$ is
required to have prescribed boundary behaviour near the boundary of
the polytope, obtained by Guillemin~\cite{G}. This equation has been
studied extensively by Donaldson~\cite{ D3, D, D4} finally solving it in
the 2 dimensional case, when $A$ is a constant in \cite{D5}. The main
difficulty is dealing with the boundary of the polytope, and in our case
this does not arise. As a result our problem is significantly simpler
and the ideas in \cite{D} and also Trudinger-Wang~\cite{TW} can be used
to solve the equation in any dimension. 
Other recent works on Abreu's equation include
the works of Chen-Li-Sheng~\cite{CLS1, CLS2}.

We solve the equation using the continuity method, which relies on
finding a priori estimates for the solution $u$. We first set up the
continuity method, and prove the uniqueness of the solution
in Section~\ref{sec:cont}. Then we need to
obtain estimates for the determinant of the Hessian $\det(u_{ij})$ from
above and below, which can be done by a maximum principle argument
adapted from \cite{TW}. We will show this in Section~\ref{sec:det}.
Then in Section~\ref{sec:higher},
following \cite{TW} we use the theorem of
Caffarelli-Gutierrez~\cite{CG} to obtain H\"older estimates for
$\det(u_{ij})$, from which higher order estimates follow from
Caffarelli~\cite{C} and the Schauder estimates. This will complete the
proof of Theorem~\ref{thm:main}. An important step in
this method is to obtain the strict convexity of the solution $u$ in the
sense that we need to control at least one section of the convex
function $u$. For
our problem this is quite easy to show in any dimension even
independently of the equation, but in
\cite{TW} and \cite{D} this is the step which restricts the results to
$n=2$.

We mentioned that Abreu's equation on a convex polytope with suitable
boundary data is related to prescribing the scalar curvature on toric
varieties. Similarly periodic solutions are related to prescribing the
scalar curvature on Abelian varieties. We will explain this in
Section~\ref{sec:abelian}.

\subsection*{Acknowledgements}
The first named author would like to thank his advisor S. Zelditch for
his support and encouragement, and also V. Tosatti for many helpful discussions.

\section{The continuity method and uniqueness}\label{sec:cont}
Given any smooth
periodic $A$ with average $0$, we use the continuity method to solve the
family of equations
\begin{equation}\label{eq:cont}
	(u^{ij})_{ij} = tA,
\end{equation}
where $u(x) =\frac{1}{2}|x|^2 + \phi(x)$ for some periodic
function $\phi$ and we
are summing over the repeated indices.
	
Let us write $S\subset [0,1]$ for the set of parameters $t$ for
which we can solve Equation~(\ref{eq:cont}). Clearly $0\in S$, since then
$\phi=0$ is a solution, thus $S$ is nonempty. To
show $S$ is open, we use the implicit function theorem.

\begin{lem}
	The set $S$ is open.
\end{lem}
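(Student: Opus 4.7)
The plan is to apply the implicit function theorem in suitable H\"older spaces of periodic functions. Fix $k \geq 0$ and $\alpha \in (0,1)$, and set
\begin{equation*}
\mathcal{B}_1 = \bigl\{\phi \in C^{k+4,\alpha}(\mathbb{T}^n) : \textstyle\int_\Omega \phi\,dx = 0\bigr\},\quad
\mathcal{B}_2 = \bigl\{A \in C^{k,\alpha}(\mathbb{T}^n) : \textstyle\int_\Omega A\,dx = 0\bigr\},
\end{equation*}
where $\mathbb{T}^n = \mathbb{R}^n/\mathbb{Z}^n$. On the open set $\mathcal{U} \subset \mathcal{B}_1$ on which $u = \tfrac{1}{2}|x|^2 + \phi$ is strictly convex, I would define $F(\phi) = \sum_{i,j}(u^{ij})_{ij}$. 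Two integrations by parts (with no boundary contributions by periodicity) show $F(\phi) \in \mathcal{B}_2$, and the smoothness of $F$ as a map of Banach spaces is routine since $(u^{ij})$ is smooth in $(u_{ij})$.

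Given $t_0 \in S$ with corresponding solution $\phi_0$, I would linearise $F$ at $\phi_0$ using $\delta u^{ij} = -u^{ik}\psi_{kl}u^{lj}$, obtaining
\begin{equation*}
L\psi = -\sum_{i,j,k,l}\bigl(u^{ik}\psi_{kl}u^{lj}\bigr)_{ij}.
\end{equation*}
The principal symbol of $L$ at a covector $\xi$ is $(u^{ij}\xi_i\xi_j)^2$, so $L$ is a fourth-order elliptic operator on the compact manifold $\mathbb{T}^n$.

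The crux of the lemma is to show that $L : \mathcal{B}_1 \to \mathcal{B}_2$ is an isomorphism. Integrating by parts twice,
\begin{equation*}
\int_\Omega \eta\, L\psi\,dx = -\int_\Omega u^{ik}u^{lj}\,\eta_{ij}\psi_{kl}\,dx,
\end{equation*}
which is symmetric in $\eta$ and $\psi$, so $L$ is formally self-adjoint. Taking $\eta = \psi$ and writing $U = (u^{ij})$, the integrand on the right equals $\bigl\|U^{1/2}(D^2\psi)U^{1/2}\bigr\|^2$, which vanishes only when $D^2\psi \equiv 0$; periodicity then forces $\psi$ to be constant, and the mean-zero normalisation gives $\psi = 0$. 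Hence $\ker L = 0$ in $\mathcal{B}_1$. Standard Fredholm theory for elliptic operators on the compact torus gives $\mathrm{ind}(L) = 0$, and the same integration-by-parts argument used for $F$ places $\mathrm{image}(L)$ in $\mathcal{B}_2$; combined with injectivity, this yields that $L : \mathcal{B}_1 \to \mathcal{B}_2$ is an isomorphism.

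The implicit function theorem then produces a neighborhood $\mathcal{V}$ of $t_0 A$ in $\mathcal{B}_2$ and a $C^1$ inverse of $F$ on $\mathcal{V}$; for $t$ sufficiently close to $t_0$, $tA \in \mathcal{V}$, giving a periodic solution $\phi_t$ of (\ref{eq:cont}), so $t \in S$. The only real obstacle is the isomorphism claim for $L$, and it rests on the positive definiteness of $(u^{ij})$ coming from convexity of $u$: this is what delivers both the ellipticity of $L$ and the strict sign of the quadratic form used to pin down $\ker L$.
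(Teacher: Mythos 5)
Your proposal is correct and follows essentially the same route as the paper: set up $\mathcal{F}$ between mean-zero H\"older spaces on $\mathbb{T}^n$, compute the linearization $(u^{ia}\psi_{ab}u^{bj})_{ij}$ (up to sign convention), and show it is an elliptic, formally self-adjoint isomorphism with trivial kernel via the quadratic form $\int u^{ik}u^{lj}\psi_{ij}\psi_{kl}\,d\mu$, then invoke the implicit function theorem. The only detail the paper adds is that the resulting $C^{4,\alpha}$ solution is smooth by elliptic regularity, which is needed since $S$ consists of parameters admitting smooth solutions.
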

\begin{proof}
Since we are only interested in periodic $\phi$ it is natural to work on
the torus $T^n$. Let us write $C^{k,\alpha}_0(T^n)$ for the space of
$C^{k,\alpha}$ functions on $T^n$ with average 0. Define the map
\[ \begin{aligned}
	\mathcal{F} : C^{4,\alpha}_0(T^n) &\to C^{0,\alpha}_0(T^n)  \\
	\phi &\mapsto (u^{ij})_{ij},
\end{aligned}\]
where $u(x)=\frac{1}{2}|x|^2 + \phi(x)$ as before. Note that if $\phi$
is periodic
then so is $(u^{ij})_{ij}$ and also integration by parts shows that
$(u^{ij})_{ij}$ has average 0.

Consider the
linearization $\mathcal{L}$ of $\mathcal F$ at $\phi$
which is given by
\[ \mathcal{L}(\psi) =(u^{ia}\psi_{ab}u^{bj})_{ij}.\] Then
$\mathcal L$ gives a linear elliptic operator
\[ \mathcal L:C_0^{4,\alpha}(T^n)\rightarrow C^{0,\alpha}_0(T^n).\]
The operator is self-adjoint and its kernel is trivial so it is an
isomorphism. By the implicit function theorem, this means
that if Equation~(\ref{eq:cont}) has a smooth solution for $t=t_0$,
then we can solve the equation for all nearby $t$ too. This nearby
solution is a priori in $C^{4,\alpha}$, but by elliptic regularity it is
actually smooth. Hence $S$ is open.
\end{proof}

The fact that $S$ is closed follows from the a priori estimates for the
solution given in Lemma~\ref{lem:higher}, which will complete the
existence part of Theorem~\ref{thm:main}. For now we
will prove the uniqueness of the solution.

\begin{lem}
	Suppose that $\phi_0$ and $\phi_1$ are periodic and
	that $u=\frac{1}{2}|x|^2 + \phi_0$ and $v=\frac{1}{2}|x|^2 +
	\phi_1$ are convex
	functions satisfying
	\[ (u^{ij})_{ij} = (v^{ij})_{ij} = A.\]
	Then $\phi_0=\phi_1 + c$ for some constant $c$.
\end{lem}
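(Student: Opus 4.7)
The plan is to prove uniqueness by a direct integration-by-parts/convexity argument, without invoking the full linearization theory. Set $\psi = \phi_1-\phi_0$ and interpolate $u_t = (1-t)u + tv = \tfrac{1}{2}|x|^2 + (1-t)\phi_0 + t\phi_1$. Each $u_t$ is convex since the space of positive definite Hessians is convex, so the inverse Hessian $u_t^{ij}$ is well defined.

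Next I would compute, via the standard formula for the derivative of an inverse matrix,
\[
\frac{d}{dt}u_t^{ij} = -u_t^{ia}\psi_{ab}u_t^{bj}.
\]
Differentiating twice in $x$ and integrating from $0$ to $1$, the hypothesis $(u^{ij})_{ij} = (v^{ij})_{ij}$ yields
\[
0 = (v^{ij})_{ij} - (u^{ij})_{ij} = -\int_0^1 \bigl(u_t^{ia}\psi_{ab}u_t^{bj}\bigr)_{ij}\,dt
\]
as smooth periodic functions on $T^n$. Multiplying this identity by $\psi$, integrating over $T^n$, and integrating by parts twice (legal because of periodicity, with no boundary terms), the right hand side becomes
\[
\int_0^1\!\int_{T^n} u_t^{ia}\psi_{ab}u_t^{bj}\psi_{ij}\,dx\,dt.
\]

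The integrand is exactly $\operatorname{tr}\bigl((U_t^{-1}\Psi)^2\bigr)$ where $U_t$ is the Hessian of $u_t$ and $\Psi$ is the Hessian of $\psi$. Since $U_t$ is symmetric positive definite, this can be rewritten as $\operatorname{tr}\bigl((U_t^{-1/2}\Psi U_t^{-1/2})^2\bigr) \geq 0$, with equality only when $\Psi = 0$. So the vanishing of the integral forces $\psi_{ab}\equiv 0$; then $\psi$ is affine, and periodicity forces it to be a constant. The only nontrivial step is the symmetric-matrix observation that makes the integrand nonnegative; the rest is routine manipulation, so I do not expect a real obstacle.
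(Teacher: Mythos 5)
Your proof is correct and is essentially the paper's argument in unwound form: the paper phrases it via convexity of the functional $\mathcal F_A(\phi)=\int_\Omega -\log\det(u_{ij})+A\phi\,d\mu$ along the linear path $\phi_t$, whose second derivative is exactly your integrand $\int_\Omega u_t^{ia}\psi_{ab}u_t^{bj}\psi_{ij}\,d\mu$ and whose first derivative vanishing at $t=0,1$ is exactly your use of the hypothesis. The positivity of that quadratic expression and the conclusion that a periodic $\psi$ with $\psi_{ab}\equiv 0$ is constant coincide with the paper's final step.
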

\begin{proof}
Consider the functional
\[ \mathcal F_A(\phi)= \int_\Omega -\log \det(u_{ij}) +
A\phi\,d\mu,\]
where $\Omega$ is the fundamental domain, $u=\frac{1}{2}|x|^2 + \phi$
as usual and $d\mu$ is the Lebesgue measure.
This is analogous to the functional used by Donaldson~\cite{D3} which in
turn is based on the Mabuchi functional~\cite{M}.

This functional is convex along the linear path $\phi_t=(1-t)\phi_0 +
t\phi_1$, in fact writing $\psi=\phi_1-\phi_0$ we have
\begin{equation}\label{eq:convex}
	\frac{d^2}{dt^2} \mathcal{F}_A(\phi_t) = \int_\Omega
(u_t)^{ia}\psi_{ab}(u_t)^{bj}\psi_{ij}\,d\mu\geqslant 0,
\end{equation}
where $u_t = \frac{1}{2}|x|^2 + \phi_t$.

At the same time
\[ \frac{d}{dt} \mathcal{F}_A(\phi_t) = \int_\Omega -(u_t)^{ij}\psi_{ij} +
A\psi\,d\mu = \int_\Omega \big[A-(u_t^{\,ij})_{ij}\big]\psi\,d\mu,\]
where we can integrate by parts without a boundary term
because of the periodicity. By our assumptions, this derivative vanishes
for $t=0$ and $t=1$, so by the convexity (\ref{eq:convex}), the
functional $\mathcal{F}_A(\phi_t)$ is constant for $t\in[0,1]$. It then
follows from (\ref{eq:convex}) that
\[ \int_\Omega (u_0)^{ia}\psi_{ab}(u_0)^{bj}\psi_{ij}\,d\mu = 0, \]
and since $u_0$ is convex and $\psi$ is periodic, this implies that
$\psi$ is a constant.
\end{proof}

\section{Bounds for the determinant}\label{sec:det}
In this section we will prove the following.
\begin{lem}\label{lem:det}
	Suppose that $\phi:\mathbb{R}^n\to\mathbb{R}$ is a
	periodic smooth function and $u(x)=\frac{1}{2}|x|^2+\phi(x)$
	satisfies
	Abreu's Equation (\ref{eq:abreu}). Then
	\[ c_1<\det(u_{ij})<c_2,\]
	where $c_1,c_2 >0$ are constants only
	depending on $A$.
\end{lem}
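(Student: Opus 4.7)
The plan is to rewrite Abreu's equation in a form amenable to the maximum principle, following the approach adapted from \cite{TW}. Exploiting the divergence-free property of the cofactor matrix of $D^2u$, namely $\partial_j(\det(u_{ab})\,u^{ij})=0$, one gets $\partial_j u^{ij}=-u^{ij}v_j$ with $v=\log\det(u_{ij})$, and a short calculation then yields
$$(u^{ij})_{ij} = u^{ij}v_iv_j - u^{ij}v_{ij}.$$
Equation~(\ref{eq:abreu}) therefore takes the equivalent forms $u^{ij}(v_iv_j-v_{ij})=A$ and, setting $f=e^{-v}=1/\det(u_{ij})$, $u^{ij}f_{ij}=Af$, a linear elliptic equation for the periodic function $f$. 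A direct application of the maximum principle to $v$ only yields the sign conditions $A\ge 0$ at $\max v$ and $A\le 0$ at $\min v$, which is not quantitative.

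For a quantitative bound I would apply the maximum principle to the periodic test function $G=-v+\beta\phi$, with $\beta>0$ a constant to be determined. At a minimum of $G$ the conditions $\nabla G=0$ and $D^2G\ge 0$ force $v_i=\beta\phi_i$ and $v_{ij}\le\beta(u_{ij}-\delta_{ij})$. Tracing with the positive definite matrix $u^{ij}$ and using $u^{ij}v_{ij}=\beta^2u^{ij}\phi_i\phi_j-A$ from the reformulated equation, one obtains
$$\beta^2u^{ij}\phi_i\phi_j - A \le \beta\bigl(n - \sum_i u^{ii}\bigr).$$
Since $u^{ij}\phi_i\phi_j\ge 0$, discarding this term gives $\sum_i u^{ii}\le n+\|A\|_\infty/\beta$ at the minimum of $G$. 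By Hadamard's inequality and AM--GM applied to the positive definite matrix $(u^{ij})$, we get $\det(u^{ij})\le \prod_i u^{ii}\le (\sum_i u^{ii}/n)^n$, and hence a pointwise lower bound $\det(u_{ij})\ge c(\beta)>0$ at that point, with $c(\beta)\to 1$ as $\beta\to\infty$. The complementary bound is obtained by running the same argument on the Legendre dual $u^*(y)=\tfrac12|y|^2+\psi(y)$, which has $\psi$ periodic, $\det u^*_{ij}=1/\det u_{ij}$, and satisfies the linearized Monge--Amp\`ere-type equation $(u^*)^{ij}(\log\det u^*_{kl})_{ij}=A\circ\nabla u^*$.

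The main technical obstacle is promoting these pointwise bounds at the unknown critical points of $G$ to uniform $L^\infty$ bounds over the whole torus. One expects to close the argument either by combining the pointwise estimate with the volume identity $\int_\Omega\det(u_{ij})\,dx=1$---which holds because $\nabla u$ sends the fundamental domain $\Omega$ to a fundamental domain of $\mathbb{Z}^n$---or by refining the test function (for instance via an additional term in $|\nabla\phi|^2$) so that its extrema are comparable to those of $v$. The heart of the difficulty is controlling the term $u^{ij}\phi_i\phi_j$ at the critical points, which is nonnegative but has no a priori bound; choosing $\beta$ carefully, or exchanging the roles of $u$ and $u^*$, is what should absorb it. The periodic setting avoids the boundary complications of the polytope case, making the bulk maximum principle argument comparatively clean.
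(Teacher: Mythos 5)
Your reformulation $(u^{ij})_{ij}=u^{ij}v_iv_j-u^{ij}v_{ij}$ with $v=\log\det(u_{ij})$ is correct, and the algebra at the critical point of $G=-v+\beta\phi$ checks out, but the test function has the wrong sign on $v$, and this is not merely the ``technical obstacle'' you defer at the end --- it makes the estimate unusable. At the minimum $q$ of $G=-v+\beta\phi$ you derive $\sum_i u^{ii}(q)\leqslant n+\|A\|_\infty/\beta$, hence $\det(u_{ij})(q)\geqslant c(\beta)$, i.e.\ a \emph{lower} bound on $v(q)$. But the minimum of $-v+\beta\phi$ sits (up to the bounded perturbation $\beta\phi$) at the \emph{maximum} of $v$, and the only global information the extremality gives is $v(y)\leqslant v(q)+\beta(\phi(y)-\phi(q))$, which requires an \emph{upper} bound on $v(q)$ to propagate. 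So the pointwise estimate lands at the wrong point and in the non-propagatable direction; no refinement of the propagation step can fix this. The tell-tale sign is that the term $\beta^2u^{ij}\phi_i\phi_j$ came out with the favorable sign and could be discarded: the genuine difficulty you correctly identify at the end (absorbing $u^{ij}\phi_i\phi_j$) should not simply vanish. The repair is to minimize $F=v+\beta\phi$ instead: there $v_i=-\beta\phi_i$ and $v_{ij}\geqslant-\beta\phi_{ij}$ give $\beta\sum_iu^{ii}\leqslant\beta n+\|A\|_\infty+\beta^2u^{ij}\phi_i\phi_j$, the last term now has the bad sign and is absorbed via $u^{ij}\phi_i\phi_j\leqslant|\nabla\phi|^2\sum_iu^{ii}$, Lemma~\ref{dds}, and $\beta$ \emph{small} (not large); then $\det(u_{ij})\geqslant c$ at the minimum of $F$, which does propagate to all of $\Omega$ since $v(y)\geqslant v(q)-2\beta\sup|\phi|$. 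This is, after transporting through the Legendre transform, exactly the paper's lower-bound argument with the test function $-L-\beta|\nabla v|^2+v$.

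The complementary bound (the upper bound on $\det(u_{ij})$) is not actually carried out, and ``running the same argument on the Legendre dual'' needs more care than you indicate: the dual potential does not satisfy Abreu's equation but rather $(u^*)^{ij}(\log\det u^*_{kl})_{ij}=\tilde A$, which you do note, so the computation at the critical point is genuinely different (it is in fact easier --- the paper minimizes $L+\tfrac12|y|^2+2\psi$ with $L=\log\det(u^*_{ij})$ and needs no small parameter $\beta$ at all). One must also justify that the minimum of a test function containing the non-periodic term $\tfrac12|y|^2$ is attained in a fixed bounded set, which the paper does via the elementary localization argument around its inequality (\ref{eq:B2}). Your observation that $\nabla u$ maps $\Omega$ to a fundamental domain, so $\int_\Omega\det(u_{ij})\,dx=1$, is correct but by itself only prevents $\det(u_{ij})$ from being uniformly large or uniformly small; it does not substitute for the pointwise maximum-principle bound.
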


First we give a simple $C^0$ and $C^1$ bound for $\phi$.
\begin{lem} \label{dds}
	If $\phi$ is periodic, $\phi(0)=0$ and $\frac{1}{2}|x|^2 +
	\phi(x)$ is convex, then
	\[ |\phi|, |\nabla\phi| < C\]
	for some constant $C$.
\end{lem}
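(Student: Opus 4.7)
The plan is to first obtain a uniform $C^1$ bound on $\phi$ by exploiting the rigid interaction between the convexity of $u$ and the periodicity of $\phi$, and then deduce the $C^0$ bound by integrating along a segment from the origin, using $\phi(0)=0$.

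For the gradient estimate, fix $x\in\mathbb{R}^n$ and a standard basis vector $e_i$. The convexity of $u$ gives
\[ u(x+e_i) \geqslant u(x) + \partial_i u(x). \]
Writing $u(y)=\tfrac12|y|^2+\phi(y)$, the left side equals $u(x)+x_i+\tfrac12+\phi(x+e_i)-\phi(x)$, and the periodicity $\phi(x+e_i)=\phi(x)$ eliminates the $\phi$ difference, while the right side is $u(x)+x_i+\partial_i\phi(x)$. Cancelling then yields $\partial_i\phi(x)\leqslant\tfrac12$. Running exactly the same argument with $y=x-e_i$ produces $\partial_i\phi(x)\geqslant-\tfrac12$. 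Hence $|\nabla\phi|\leqslant\sqrt{n}/2$ pointwise.

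For the $C^0$ bound, since $\phi$ is periodic it suffices to bound $\phi$ on the fundamental domain $[0,1]^n$. Using $\phi(0)=0$ and the gradient estimate, the mean value theorem along the segment from $0$ to $x$ gives $|\phi(x)|\leqslant|x|\cdot\|\nabla\phi\|_\infty\leqslant n/2$ for every $x\in[0,1]^n$, and then for all of $\mathbb{R}^n$ by periodicity.

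There is essentially no obstacle: the lemma is short because periodicity and global convexity are a very stiff combination. The quadratic term $\tfrac12|x|^2$ is the minimal convex correction making $\tfrac12|x|^2+\phi$ convex, so the gradient of $\phi$ can deviate from zero by at most what the identity map $x\mapsto x$ changes over one period, which is bounded by $1$. Note that the same argument works verbatim if $\tfrac12|x|^2$ is replaced by any smooth strictly convex $f$ with $f_{ij}$ periodic, as promised in the remark after Theorem~\ref{thm:main}.
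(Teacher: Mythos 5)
Your proof is correct, and it reverses the order of the paper's argument in a way worth noting. The paper first gets the $C^0$ bound from the Hessian inequality $D^2\phi>-\mathrm{Id}$ (comparing an arbitrary point to the global maximum of $\phi$, where the gradient vanishes), and only then deduces the gradient bound from convexity together with the already-established bound on $|\phi|$. You instead get the gradient bound first and for free: applying the support-plane inequality $u(x\pm e_i)\geqslant u(x)\pm\partial_i u(x)$ across exactly one period makes the $\phi$-terms cancel by periodicity, leaving $|\partial_i\phi|\leqslant\tfrac12$ with no prior information needed; the $C^0$ bound then follows by integrating from the origin. Both arguments are elementary and rest on the same two ingredients (the first-order convexity inequality and periodicity), but yours produces explicit constants ($\sqrt{n}/2$ and $n/2$) and cleanly isolates the fact that only the $C^0$ bound uses the normalization $\phi(0)=0$. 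Your closing remark about replacing $\tfrac12|x|^2$ by a strictly convex $f$ with periodic $f_{ij}$ also checks out: the quantity $f(x+e_i)-f(x)-\partial_i f(x)=\int_0^1(1-t)f_{ii}(x+te_i)\,dt$ is bounded by $\sup f_{ii}$, which is finite by periodicity of the Hessian, so the same cancellation goes through.
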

\begin{proof}
	Since $\phi$ is periodic, it is enough to consider the
	fundamental domain $\Omega=[0,1]^n$.
	Since $\frac{1}{2}|x|^2+\phi$ is convex, we must
	have the lower bound $D^2\phi > -\mathrm{Id}$ on the Hessian of
	$\phi$.
	If $\sup_\Omega\phi = \phi(x_{max})$ then we get for every $y$
	that
	\[ \phi(y) > \phi(x_{max}) - |y-x_{max}|^2.\]
	Using that $\phi$ is periodic and $\phi(0)=0$, it follows from
	this that $|\phi|<C$ for some uniform $C$.
	It follows from the convexity of $u$ that
	$|\nabla\phi| < C$ too. 
\end{proof}

Now we turn to the proof of Lemma \ref{lem:det}, using ideas from
Trudinger-Wang~\cite{TW}. While it is not
strictly necessary, it is convenient to study instead the Legendre
transform $v$ of the convex function $u$. The dual coordinate $y$ is
defined by $y = \nabla u$,
and then $v$ is defined by the equation
\[ v(y) + u(x) = y\cdot x.\]
The Legendre transform  $v$ is of the form
\[ v(y) = \frac{1}{2}|y|^2 + \psi(y),\]
where $\psi:\mathbb{R}^n\to\mathbb{R}$ is periodic. Moreover $v$
satisfies the equation
\begin{equation}\label{eq:neweq}
	\begin{aligned}
		v^{ij}L_{ij} &= \tilde{A} \\
		L &= \log\det(v_{ab}),
	\end{aligned}
\end{equation}
where $\tilde{A}(y) = A(x)$, so certainly $\sup|\tilde{A}| = \sup|A|$.
Since $\det(v_{ab})(y) = \det(u_{ab})^{-1}(x)$, to prove
Lemma~\ref{lem:det} it is enough to bound $\det(v_{ab})$ from above and
below by a constant depending on $\sup|\tilde{A}|$.

\subsection{Upper bound for $\det(u_{ij})$}

Consider the function
\[ f(y)=L + \frac{1}{2}|y|^2+2\psi,\]
where $L=\log\det(v_{ab})$.
Since $v_{ij}$ and $\psi$ are both periodic,
the global minimum of $f$ must be achieved at some point
$p\in [-1,1]^n$. We can rewrite $f$ as
\[ f(y)=L - \frac{1}{2}|y|^2+2v. \]
At the point $p$ we have
\[ 0\leqslant v^{ij}f_{ij}=\tilde{A}-v^{ii}+2n, \]
So
\[ v^{ii}(p)\leqslant \sup|\tilde{A}| + 2n. \]
By the arithmetic-geometric mean inequality, we have
\[ \det (v^{ij})(p)\leqslant
\left(\frac{Tr(v^{ij})(p)}{n}\right)^n\leqslant c,\]
where $c=(\frac{1}{n}\sup_\Omega |\tilde{A}| +2)^n>0$ is a constant.
Since $p$ is the minimum of $f$, for any $y\in\Omega = [0,1]^n$ we have
\[ \log \det (v_{ij})(y)\geqslant \log \det
(v_{ij})(p)+\frac{1}{2}|p|^2+
2\psi(p)-\frac{1}{2}|x|^2-2\psi(x) > c',\]
for some constant $c'$. The last inequality is given since
$\det (v_{ij})(p)\geqslant \frac{1}{c}$ and the rest
can be bounded by Lemma \ref{dds} in the region $\Omega$.
Thus we have $\det (v_{ij})(x)> e^{c'}>0$. By Legendre duality, we have
$$\det(u_{ij})<e^{-c'}$$ where $c'$ is some constant only depending on
$\sup|\tilde{A}| = \sup|A|$.

\subsection{Lower bound for $\det(u_{ij})$}
Now consider the function
\[ g(y)=-L-\beta|\nabla v|^2+v = -L+\psi-\beta|\nabla
v|^2+\frac{1}{2}|y|^2.\]
Choose $\beta$ sufficiently small such that on  $\mathbb{R}^n$,
we have
\[ \beta |\nabla v|^2\leqslant \frac{1}{4}|y|^2+1.\]
Such $\beta$ exist since $\nabla v=y+\nabla \psi$ and $\sup|\nabla
\psi|<c$. This implies that
\[ \frac{1}{4}|y|^2 - 1 \leqslant -\beta|\nabla v|^2 + \frac{1}{2}|y|^2
\leqslant \frac{1}{2}|y|^2,\]
so that
\begin{equation}\label{eq:B2}
	\sup_{B(2)} \left(-\beta|\nabla v|^2 + \frac{1}{2}|y|^2 \right)
	<
	\inf_{\mathbb{R}^n\setminus B(4)}
	\left(-\beta|\nabla v|^2 + \frac{1}{2}|y|^2\right),
\end{equation}
where $B(R)$ denotes the ball of radius $R$ around the origin.
Since $ -L+\psi$ is periodic and the fundamental domain $\Omega\subset
B(2)$, this inequality shows that the global minimum of $g(y)$ is
achieved at a point $q\in B(4)$. Indeed if $y\not\in B(4)$ and
$y'\in\Omega$
denotes the corresponding point in the fundamental domain, then the
inequality (\ref{eq:B2}) implies that $g(y') < g(y)$, so the minimum
cannot be outside of $B(4)$.

At $q$, we have
\begin{equation}\label{eq:gi}
	0=g_i=-L_i-2\beta v_kv_{ki}+v_i
\end{equation}
and
\[ 0\leqslant v^{ij}g_{ij}=-\tilde{A}-
2\beta v^{ij}(v_{ki}v_{kj}+v_{k}v_{ijk})+n = -\tilde{A} - 2\beta v_{kk}
- 2\beta v_kL_k + n,\]
where we used that $v^{ij}v_{ijk}=L_{k}$.
Using Equation (\ref{eq:gi}) we then get that at the point $q$,
\[\begin{aligned}
	0&\leqslant -\tilde{A} - 2\beta v_{kk}-2\beta v_kL_k+n \\
	&= -\tilde{A}+n -2\beta v_{kk} - 2\beta|\nabla v|^2 +
		4\beta^2 v_k v_i v_{ik} \\
		&\leqslant \sup|\tilde{A}| +n- 2\beta v_{kk} + 4\beta ^2
		|\nabla v|^2 v_{kk}.
\end{aligned}\]
Because of Lemma \ref{dds}, we alreadly have a bound for $|\nabla v|$
on $B(4)$. So we can
choose $\beta$ sufficiently small such that $4\beta^2 |\nabla v|^2(q)
\leqslant \beta$, which implies
\[ \beta v_{kk}(q)\leqslant \sup|\tilde{A}| + n.\]
Thus we have at $q$, by arithmetic-geometric mean inequality again,
\[\log \det (v_{ij})(q)\leq c\]
for a constant $c$ depending only on $\sup|\tilde{A}|=\sup|A|$.
Since $q$ is the minimum of $g$ and we can bound $|\nabla v|$ and $|v|$
uniformly on $\Omega=[0,1]^n$ by Lemma \ref{dds}, we have for any
$y\in\Omega$
$$L(y)\leqslant L(q)+\beta\left(|\nabla v|^2(q)-|\nabla v|^2(y)\right)+
v(y)-v(q)<c''$$ where $c''$ is a finite constant only depending on
$\sup|A|$.
Thus by Legendre duality again, we have
\[ \det (u_{ij})=\det(v_{ij})^{-1} >e^{-c''} \]
on $\Omega$, but $\det(u_{ij})$ is periodic, so this lower bound holds
everywhere. This completes the proof of Lemma~\ref{lem:det}.

\section{Higher order estimates}\label{sec:higher}
Given the determinant bound of Lemma~\ref{lem:det},
we can obtain estimates for all
derivatives of the solution in terms of the ``modulus of convexity'',
using the results of Caffarelli-Gutierrez~\cite{CG} on solutions
of the linearized Monge-Amp\`ere equation and Caffarelli~\cite{C} on the
Monge-Amp\`ere equation. This is roughly
identical to the discussion in Section
5.1 of Donaldson~\cite{D}, which in turn is based on the work of
Trudinger and Wang~\cite{TW1,TW}.

\begin{lem}\label{lem:higher}
	Suppose that $\phi:\mathbb{R}^n\to\mathbb{R}$ is a periodic
	smooth function and $u(x)=\frac{1}{2}|x|^2 + \phi(x)$ satisfies
	Abreu's Equation~(\ref{eq:abreu}). Then there exist constants
	$c_k$ depending on $A$ such that
	\[ \begin{gathered}
		c_1 I < (u_{ij}) < c_2I \\
		|\nabla^k \phi| < c_k.
	\end{gathered}\]
\end{lem}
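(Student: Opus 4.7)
The plan is to follow the strategy of Trudinger-Wang \cite{TW} and Section 5.1 of Donaldson \cite{D}, feeding the determinant bounds from Lemma \ref{lem:det} into the regularity theories of Caffarelli-Gutierrez \cite{CG} for the linearized Monge-Amp\`ere equation and Caffarelli \cite{C} for the Monge-Amp\`ere equation itself. The key intermediate step is a quantitative strict convexity for $u$; one then derives a H\"older bound on $\det(u_{ij})$, then $C^{2,\alpha}$ regularity of $u$, and finally smoothness by iterated Schauder estimates.

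The first substantive step is to control at least one section of $u$. In the polytope setting this is precisely the obstruction that restricts existing results to $n=2$, but in our periodic setup it comes essentially for free: since $\phi$ is uniformly bounded by Lemma \ref{dds}, the function $u$ differs from $\tfrac{1}{2}|x|^2$ by at most a constant $C$. Hence for any $x_0$ and any affine function $\ell$ supporting $u$ at $x_0$, the sublevel set $\{u-\ell\leqslant h\}$ is trapped between two translated paraboloids and is therefore compactly contained in a Euclidean ball whose radius depends only on $C$ and $h$. This supplies the uniform modulus of convexity required for Caffarelli-Gutierrez.

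Next I would pass to the Legendre dual $v=\tfrac{1}{2}|y|^2+\psi$ and use Abreu's equation in the form $v^{ij}L_{ij}=\tilde A$ with $L=\log\det(v_{ab})$, as already set up in Section \ref{sec:det}. The coefficient matrix $(v^{ij})$ has determinant uniformly bounded above and below by Lemma \ref{lem:det}, and $v$ enjoys the analogous quantitative strict convexity by duality together with Lemma \ref{dds} applied to $\psi$. The Caffarelli-Gutierrez Harnack inequality for the linearized Monge-Amp\`ere operator then yields a uniform $C^{0,\alpha}$ bound on $L$, and hence on $\det(u_{ij})$. With this H\"older bound on $\det(u_{ij})$ and the strict convexity of $u$ in hand, Caffarelli's interior regularity theorem \cite{C} upgrades $u$ to $C^{2,\alpha}$, which is exactly the eigenvalue bound $c_1 I<(u_{ij})<c_2 I$.

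Once $(u^{ij})$ is a uniformly elliptic $C^{0,\alpha}$ coefficient matrix and $A$ is smooth, Abreu's fourth order equation reads as a linear elliptic equation with H\"older coefficients; differentiating the equation and iterating Schauder estimates then produce $|\nabla^k\phi|\leqslant c_k$ for every $k$. The main technical point I expect to require care is checking that the hypotheses of Caffarelli-Gutierrez hold uniformly over the fundamental domain, but periodicity makes both the section control and the determinant bounds uniform, so this step should go through cleanly.
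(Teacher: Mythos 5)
Your overall strategy is the paper's: the determinant bounds of Lemma~\ref{lem:det} plus the essentially free section control coming from Lemma~\ref{dds} (correctly identified as the step that is hard in the polytope setting and trivial here) feed into Caffarelli--Gutierrez \cite{CG} for a H\"older bound on the determinant, then Caffarelli \cite{C} for $C^{2,\alpha}$, then iterated Schauder estimates for the higher derivatives. Your derivation of the uniform section control from the fact that $u$ differs from $\tfrac{1}{2}|x|^2$ by a bounded amount matches the paper, which normalizes the single section $S(h)=\{u<h\}$ with $[0,1]^n\subset S(h/2)$, controls the normalizing map $T$ via $B(\sqrt{h})\subset S(h)\subset B(\sqrt{3h})$, and then spreads the resulting local estimate by periodicity.

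The one place where your write-up deviates, and where it is circular as stated, is the Caffarelli--Gutierrez step. The paper applies \cite{CG} in the original coordinates to $w=(\det(u_{ab}))^{-1}$, which satisfies the linearized Monge--Amp\`ere equation $U^{ij}w_{ij}=A$ with $U^{ij}$ the cofactor matrix of $(u_{ij})$ (noting that \cite{CG} treats the homogeneous case and that the inhomogeneous Harnack inequality is supplied by \cite{TW3} --- a point you should also record). You instead apply it in the Legendre-dual coordinates to $L=\log\det(v_{ab})$. That is legitimate, but it yields H\"older continuity of $L$ \emph{in the variable $y$}, and your phrase ``and hence on $\det(u_{ij})$'', followed by an application of Caffarelli's theorem to $u$, implicitly converts this into H\"older continuity in $x$ via $y=\nabla u(x)$; that conversion requires $\nabla u$ to be Lipschitz, i.e.\ an upper bound on $D^2u$, which is precisely what is being proved. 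The repair is easy --- apply Caffarelli's $C^{2,\alpha}$ theorem to $v$ in the $y$-coordinates, conclude $c_1I<(v_{ij})<c_2I$ there (using the determinant lower bound as well as the $C^{2,\alpha}$ bound, which by itself only gives the upper eigenvalue bound), and then dualize --- but the paper's choice of working with $U^{ij}w_{ij}=A$ directly in the $x$-coordinates avoids the issue altogether. The remaining steps, including the Schauder bootstrap on the system (\ref{eq:abreu2}), are as in the paper.
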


\begin{proof}
	To prove this, we rewrite the equation in the form
	\begin{equation}\label{eq:abreu2}
		\begin{aligned} U^{ij}w_{ij} &= A \\
				w &= (\det(u_{ab}))^{-1},
		\end{aligned}
	\end{equation}
	where $U^{ij}$ is the cofactor matrix of the Hessian $u_{ij}$.
	This form follows from the original form of the equation because
	$(U^{ij})_i=0$.

	Now define the section $S(h) = \{x\in \mathbb{R}^n\, |\, u(x) <
	h\}$. There exists a linear transformation $T$ which normalizes
	this section, which means
	\begin{equation}\label{eq:norm}
		B(0,\alpha_n) \subset T(S(h)) \subset B(0,1),
	\end{equation}
	where $\alpha_n>0$ is a constant depending on $n$ only. The bound
	on $\det(u_{ab})$ from Lemma~\ref{lem:det} and the results in
	\cite{CG} imply that we have a H\"older bound
	\[ \Vert w\Vert_{C^\alpha(S(h/2))} \leqslant C_0,\]
	but the constants $\alpha, C_0$ depend on the norms of $T$ and
	$T^{-1}$ (in addition to depending on $\sup w, \sup|A|$, but
	those are already controlled). In \cite{CG} this is shown for
	the homogeneous equation where $A=0$, but the argument can be
	extended to the non-homogeneous case. Namely the Harnack
	inequality (Theorem 5 in \cite{CG}) extends as explained in
	Trudinger-Wang~\cite{TW3} after which the arguments are
	identical.

	So we simply need to control $T$ and $T^{-1}$. Note that from
	Lemma~\ref{dds} we have
	\[ \frac{1}{2}|x|^2 - C \leqslant u(x) \leqslant
	\frac{1}{2}|x|^2 + C\]
	for some $C$, which implies that if $u(x)=h$, then
	\[ \sqrt{2(h-C)} \leqslant |x| \leqslant \sqrt{2(h+C)}.\]
	In particular for sufficiently large $h$ we have
	\begin{equation}\label{eq:sh}
		B(\sqrt{h}) \subset S(h) \subset B(\sqrt{3h}).
	\end{equation}
	Let us choose $h$ large enough so that $[0,1]^n\subset S(h/2)$.
	Then (\ref{eq:sh}) together with (\ref{eq:norm}) gives bounds on
	$T$ and $T^{-1}$ for this choice of $h$. We then have a
	$C^\alpha$ bound on $w$ in $S(h/2)$, and in particular in
	$[0,1]^n$. Since $w$ is periodic, this gives a $C^\alpha$ bound
	on $w$ everywhere.

	Now Caffarelli's Schauder estimate~\cite{C} gives $C^{2,\alpha}$
	bounds on $u$ in $[0,1]^n$. From this standard Schauder
	estimates~\cite{GT} applied to the Equations~(\ref{eq:abreu2})
	give bounds on the higher order derivatives of $u$.
\end{proof}

\section{Abelian varieties}\label{sec:abelian}
Theorem~\ref{thm:main}
can be interpreted as prescribing the scalar
curvature of a torus invariant metric in the K\"ahler class of a flat
metric over an Abelian variety. In this section we briefly explain this.

Let $V$ be a $n$-dimensional complex vector space and $\Lambda \cong
\mathbb Z^{2n}$ a maximal lattice in $V$ such that the quotient $M =
V/\Lambda$ is an Abelian variety, i.e., a complex torus which can be
holomorphically embedded in projective space.  For brevity, we only
consider the Abelian varieties $M=\mathbb{C}^n/\Lambda$ where
$\Lambda=\mathbb{Z}^n+i\mathbb{Z}^n$.
The case of general lattices can be reduced to this
case by an affine transformation.

We write $z\in M$ as $z=x+iy$,
where $x$ and $y \in \mathbb R^n$ and can be viewed as the periodic
coordinates of $M$. Let
\[\begin{aligned}\omega_0&= \frac{\sqrt{-1}}{2}
	\sum_{\alpha=1}^n dz_{\alpha}
\wedge d\bar z_{\alpha}= \sum_{\alpha=1}^n dx_{\alpha} \wedge d
y_{\alpha}\end{aligned}\]
be the standard flat metric with associated local K\"ahler potential
$\frac{1}{2}|z|^2$. The group $T^n=(S^1)^n$ acts on $M$ via translations
in the Langrangian subspace $i\mathbb R^n \subset \mathbb C^n$, thus we
can consider the following space of torus invariant K\"ahler metrics in
the fixed class $[\omega_0]$,
\[\mathcal H_{T^n}\ = \ \left\{\psi\in
C^{\infty}_{T^n} (M) : \omega_\psi\ = \ \omega_0+
\frac{\sqrt{-1}}{2}\partial \bar \partial \psi>0\right\}.\]
Functions
invariant under the translation of $T^n$ are independent of $y$, so
they are smooth function on $M/T^n\cong T^n$. In other words they are
smooth $\mathbb{Z}^n$-periodic functions in the variable $x\in
\mathbb R^n$.

Let us write the K\"ahler potential
in complex coordinates as
\[ f(z) = \frac{1}{2}|z|^2 + 4\psi(x),\]
where $\psi$ is a periodic function of $x$.
Then it is easy to see that $f_{i\bar j} = \delta_{ij} + \psi_{ij}$,
where $\psi_{ij}$ is the real Hessian of $\psi$. Let us also define the
function $v:\mathbb{R}^n\to\mathbb{R}$ given by
\[ v(x) = \frac{1}{2}|x|^2 + \psi(x).\]
Note that then $f_{i\bar j} = v_{ij}$ so in particular $v$ is strictly
convex.
It follows that the scalar
curvature of the metric is given by
\begin{equation}\label{s} S = -\sum_{i,\bar j} f^{i\bar j}\left[
	\log\det(f_{a\bar b})\right]_{i\bar j} =
-\frac{1}{4}\sum_{i,j} v^{ij}\left[ \log\det(v_{ab})\right]_{ij}.
\end{equation}

Let us now take the Legendre transform of $v$,
with dual coordinate $t=\nabla
v(x)$. The transformed function $u$ is defined by
\[ u(t) + v(x) = t\cdot x.\]
A calculation (see Abreu~\cite{A}) then gives
\begin{equation}\label{s2} 
	S(x) = -\frac{1}{4}\sum_{i,j} \frac{ \partial^2
	u^{ij}(t)}{\partial t_i\partial t_j},
\end{equation}
i.e.  the scalar curvature equation is equivalent to
 Abreu's equation, where the equivalence is given by the Legendre
transform. 

Thus Theorem~\ref{thm:main} implies that we can prescribe the
scalar curvature of torus invariant metrics on an Abelian variety, as
long as we work in the Legendre transformed ``symplectic'' coordinates
instead of the complex coordinates. For example in complex dimension 1,
working in complex coordinates essentially amounts to working in a fixed
conformal class. In this case Kazdan-Warner~\cite{KW} have given
necessary and sufficient conditions for a function to be the scalar
curvature of a metric conformal to a fixed metric $g$. 
Their conditions are that either
$S\equiv 0$, or the average of $S$ with respect to $g$
is negative and $S$ changes sign.
When working in symplectic coordinates (and only $S^1$-invariant
metrics) then we have seen that the condition is that $S$ has zero mean.

\end{document}